\newtheorem{theorem}{Theorem}
\newtheorem{lemma}{Lemma}
\newtheorem{remark}{Remark}
\def\qed{{\ \hfill\hbox{\vrule width1.0ex height1.0ex}\parfillskip 0pt
	}}
	\newenvironment{proof}{\noindent{\bf Proof:}}{\qed}
\begin{document}
\title{A characterization of normality via convex likelihood ratios}
\author{Royi Jacobovic  \thanks{Department of Statistics and data science; The Hebrew University of Jerusalem; Jerusalem 9190501; Israel.
{\tt royi.jacobovic@mail.huji.ac.il, offer.kella@gmail.com}} \thanks{Supported by the GIF Grant 1489-304.6/2019} \and Offer Kella\footnotemark[1] \thanks{Supported by grant No. 1647/17 from the Israel Science Foundation and the Vigevani Chair in Statistics.}}

\date{March 3, 2022}
		\maketitle
		\begin{abstract}
			This work includes a new characterization of the multivariate normal distribution. In particular, it is shown that a positive density function $f$ is Gaussian if and only if the $f(x+ y)/f(x)$ is convex in $x$ for every $y$. This result has implications to recent research regarding inadmissibility of a test studied by Moran~(1973).
		\end{abstract}

\bigskip
\noindent {\bf Keywords:} Characterization of probability distributions, multivariate normal, Gaussian, convex likelihood ratio.

\bigskip
\noindent {\bf AMS Subject Classification (MSC2020):} 62H05, 62E10.

\section{Introduction}
There are many characterizations of the normal distribution. For some surveys see, {\em e.g.,} \cite{hamedani1992,kagan1973,patel1996,tong2012} and more recently, \cite{Azzalini20007,e2016,hgm19,novak2007,v14}.  The present study is about a new characterization, which is that a positive (possibly multivariate) density $f$ is Gaussian if and only if $f(x+y)/f(x)$ is convex in $x$ for every $y$. Note that the latter is equivalent to the convexity of the likelihood ratio $f(x-\theta_1)/f(x-\theta_0)$ in $x$ for every $\theta_0\not=\theta_1$ where $\theta_1,\theta_2$ are (possibly multivariate) location parameters. A characterization of the one parameter exponential family via monotonicity of (more general) likelihood ratios may be found in \cite{bp63}.

The motivation for this follows from a statistical hypothesis problem. Assume that one observes an i.i.d. sample. The distribution is known up to a location parameter $y$ and the need arises to test whether $y$ is the null vector or not. In \cite{m73} Moran suggested a test which is based on a single split of the data. The rationale is to use the first part of the data in order to estimate in what direction some estimator of
$y$ diverges from the null. Then,  the second part is applied in order to test whether the true value of
$y$ diverges from the null in this direction. Theorem 4 in  \cite{j21} asserts that once the observations are normally distributed, then this test is inadmissible. A careful reading of the proof of this theorem yields that the same conclusion can be made for every density function $f(\cdot)$ such that $f(x+ y)/f(x) $ is convex in $x$ for every $y$. This issue, has been the initial motivation to  characterize the space of density functions satisfying this property, which as announced can unfortunately only be Gaussian. 

One potential direction for further research is the possible application of the current characterization to devising new normality tests. For various examples of normality tests see, {\em e.g.,} \cite{henze2002,jelito2021,Khatun2021} and references therein.

\section{Main results}\label{sec: result}
We begin with the following.
\begin{lemma}\label{l1} $g:\mathbb{R}^n\to\mathbb{R}$ is convex, strictly positive and $g(x+y)/g(x)$ is convex in $x$ for every $y$ if and only if 
for some symmetric positive semidefinite $A\in\mathbb{R}^{n\times n}$, $b\in\mathbb{R}^n$ and $c\in \mathbb{R}$,
\begin{equation}\label{eq1}
g(x)=\exp\left(\frac{1}{2}x^TAx+b^Tx+c\right)\ .
\end{equation}
\end{lemma}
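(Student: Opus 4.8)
The plan is to prove the two implications separately, with essentially all of the work going into the forward (``only if'') direction. \textbf{Sufficiency} is a direct computation: if $g(x)=\exp(\tfrac12 x^{T}Ax+b^{T}x+c)$ with $A$ symmetric positive semidefinite, then positivity is immediate, convexity follows because the exponent is convex (as $A\succeq 0$) and $\exp$ is convex and increasing, and for the ratio one expands using the symmetry of $A$ to get $g(x+y)/g(x)=\exp\left(x^{T}Ay+\tfrac12 y^{T}Ay+b^{T}y\right)$, the exponential of an affine function of $x$, hence convex in $x$. I would record this and move on.

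For \textbf{necessity}, set $h=\log g$, which is legitimate since $g>0$ and is continuous because finite convex functions on $\mathbb{R}^n$ are continuous; the goal is to show $h$ is a quadratic polynomial with positive semidefinite leading form. The first genuine step is to upgrade regularity to $h\in C^{1}$, and I expect this to be the main obstacle. I would argue by a ``no-kink'' principle: restricting to a line and writing the one-sided derivatives of the convex ratio $g(\cdot+y)/g(\cdot)$ at a putative point of nondifferentiability $x_{0}$, the kink of the denominator forces the right derivative of the ratio to lie strictly below its left derivative, which contradicts convexity of the ratio; choosing the displacement $y$ so that the numerator is differentiable at $x_0$ (avoiding a countable exceptional set) makes this rigorous, and reducing to lines handles the $n$-dimensional case. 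This yields $g,h\in C^{1}$, and I write $m=\nabla h=\nabla g/g$, which is continuous.

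The heart of the argument is then a soft limiting step. The key observation is a built-in symmetry: for each $y$ both $g(x+y)/g(x)$ and $g(x)/g(x+y)$ are convex in $x$, the latter being the former with displacement $-y$ after an affine reparametrization. Specializing $y=tv$ and writing $\psi_{tv}(x)=h(x+tv)-h(x)$, the function $G_{t}(x)=(\exp(\psi_{tv}(x))-1)/t$ is convex in $x$ for $t>0$ (a positive rescaling of the convex function $\exp(\psi_{tv})-1$), and as $t\to 0^{+}$ it converges pointwise to $m(x)\cdot v$, so $x\mapsto m(x)\cdot v$ is convex as a pointwise limit of convex functions. Letting $t\to 0^{-}$ flips the sign of the rescaling and exhibits the \emph{same} limit $m(x)\cdot v$ as concave; here the two one-sided limits agree precisely because $h\in C^{1}$. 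Hence $x\mapsto m(x)\cdot v$ is affine for every $v$, which forces $m(x)=Ax+b$; integrating this affine gradient field gives $h(x)=\tfrac12 x^{T}Ax+b^{T}x+c$, with $A$ symmetric automatically as a Hessian.

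It remains to show $A\succeq 0$, which follows from convexity of $g$ alone. If $v^{T}Av<0$ for some $v$, then $s\mapsto g(sv)$ is a convex function on $\mathbb{R}$ whose exponent is a downward parabola, so it tends to $0$ at both $\pm\infty$ while remaining strictly positive; this is impossible for a convex function, since a value at an interior point would then exceed the chord joining two sufficiently distant points. Thus $A\succeq 0$ and $g$ has the claimed form. To reiterate, I anticipate the $C^{1}$ regularity step to be the crux: once differentiability is in hand the rest is essentially formal (limits of convex functions and a one-line convexity argument for semidefiniteness), but without it the limiting construction has no target to converge to, so the no-kink reduction to lines must be executed carefully.
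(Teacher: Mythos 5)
Your proof is correct, and its core coincides with the paper's: show that the directional derivative of $\log g$ is both convex and concave in $x$ (hence affine), integrate the affine gradient field, and force $A\succeq 0$ from convexity of $g$. Where you genuinely diverge is in the two bookend steps. For differentiability, the paper never compares one-sided derivatives of the ratio directly: it works with the one-sided directional derivative $g(x;y)=\lim_{t\downarrow 0}(g(x+ty)-g(x))/t$ (which exists by convexity alone), shows $g(x;y)/g(x)$ is convex, hence continuous, in $x$ as a limit of convex functions, combines this continuity with differentiability of $g$ on a dense set (Rockafellar, Theorem 25.5) to conclude $g(x;y)$ is linear in $y$, and then invokes Theorem 25.2 to get $C^1$. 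Your no-kink argument --- choosing the displacement $y=sv$ so that the numerator is differentiable and reading off a strict drop from left to right derivative of the convex ratio $u(t+s)/u(t)$ --- is a valid, more hands-on alternative that avoids Theorem 25.5 entirely; just be aware that passing from differentiability along every line to differentiability (and continuous differentiability) of $g$ itself still rests on the standard convex-analysis facts (Rockafellar Theorem 25.2 and Corollary 25.5.1), so the phrase ``reducing to lines handles the $n$-dimensional case'' should carry an explicit citation rather than be left implicit. For positive semidefiniteness, the paper picks an eigenvector $y$ with $Ay=\lambda y$, $\lambda<0$, evaluates at $x=-b/\lambda$ so the first-order term vanishes, and exhibits $y^T\nabla^2 g(x)y<0$; your chord argument --- a strictly positive convex function on a line cannot tend to $0$ at both ends --- is more elementary and needs no second derivatives. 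Your middle step (letting $t\to 0^{+}$ versus $t\to 0^{-}$) and the paper's (replacing $y$ by $-y$) are the same symmetry in different clothing, and your observation that $A$ is automatically symmetric as a Hessian slightly streamlines the paper's ``without loss of generality'' symmetrization.
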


\begin{proof}
If (\ref{eq1}) holds with a positive semidefinite $A$, then $g$ is log-convex. Similarly $g(x+y)/g(x)$ is log-linear in $x$ for every $y$, hence also log-convex. log-convex functions are convex.

To show the converse denote
\begin{equation}
g(x;y)=\lim_{t\downarrow 0}\frac{g(x+ty)-g(x)}{t}
\end{equation}
Since $g(x+ty)$ is convex in $t\in\mathbb{R}$, this limit exists and is finite for every $x,y$. Since $g(x+ty)/g(x)$ is convex in $x$ for every $t,y$ then so is
\begin{equation}
\frac{g(x+ty)-g(x)}{g(x)}=\frac{g(x+ty)}{g(x)}-1
\end{equation}
and thus, letting $t\downarrow 0$ implies that $g(x;y)/g(x)$ is convex and therefore also continuous in $x$ for every $y$. Since $g$ is convex then it is also continuous and thus $g(x;y)$ is continuous in $x$ for every $y$.

By Theorem~25.5 of~\cite{r97}, $g$ is differentiable on a dense subset $D$ of $\mathbb{R}^n$. Therefore, for every $x\in D$ we have that $g(x;y)=\nabla g(x)^Ty$. Since $g(x;y)$ is continuous in $x$ then for every sequence $x_n\in D$ such that $x_n\to x$ we have that 
\begin{equation}
g(x;y)=\lim_{n\to\infty}\nabla g(x_n)^Ty
\end{equation}
so that necessarily $g(x;y)$ is linear in $y$. Theorem~25.2 of~\cite{r97} implies that $g$ is (necessarily continuously) differentiable on $\mathbb{R}^n$. Moreover $\nabla g(x)^Ty/g(x)$ is convex in $x$ for every $y$. Therefore, so is $-\nabla g(x)^Ty/g(x)=\nabla g(x)^T(-y)/g(x)$ which implies that $\nabla g(x)^Ty/g(x)$ is affine in $x$ for every $y$. In particular, taking $y_i=1$ and $y_j=0$ for $j\not=i$ implies that $\frac{\partial g(x)}{\partial x_i}/g(x)=a_i^Tx+b_i$ for some $a_i\in \mathbb{R}^n$ and $b_i\in\mathbb{R}$. Therefore, if $A$ is a matrix with rows $a_i$ and $b=(b_1,\ldots,b_n)^T$ we have that
\begin{equation}
\nabla \log(g(x))=\frac{\nabla g(x)}{g(x)}=Ax+b\,.
\end{equation}
Hence,
\begin{align}
\log(g(x))-\log(g(0))&=\int_0^1\frac{{\rm d}\log(g(tx))}{{\rm d}t}{\rm d}t=\int_0^1 (A(tx)+b)^Tx{\rm d}t\nonumber \\
&=\frac{1}{2}x^TA^Tx+b^Tx=\frac{1}{2}x^TAx+b^Tx\,.
\end{align}
Without loss of generality $A$ is symmetric. If not then we replace it with $(A+A^T)/2$ and the right hand side remains the same. Taking $c=\log(f(0))$ implies~(\ref{eq1}). It remains to show that $A$ is positive semidefinite. If it is not then there exists $\lambda<0$ and $y\not=0$ such that $Ay=\lambda y$. Taking $x=-b/\lambda$ implies that $y^T(Ax+b)=0$ and thus
we have that
\begin{align}
y^T\nabla^2 g(x)y&=y^T\left(g(x)(A+(Ax+b)(Ax+b)^T)\right)y\\ &=g(x)y^TAy=g(x)\lambda \|y\|^2<0\nonumber
\end{align}
contradicting the convexity of $g$.
\end{proof}

The following is our main result.
\begin{theorem}\label{t1}
$f:\mathbb{R}^n\to\mathbb{R}$ is positive, Borel and integrates to $1$ (a positive density function). The following are equivalent.
\begin{description}
\item{\rm(i)} $f(x+y)/f(x)$ is convex in $x$ for every $y$.
\item{\rm(ii)} $f(x+y)/f(x)$ is log-convex in $x$ for every $y$.
\item{\rm(iii)} $f(x+y)/f(x)$ is log-concave in $x$ for every $y$.
\item{\rm(iv)} $f$ is a multivariate normal density (necessarily with positive definite covariance matrix). 
\end{description}
\end{theorem}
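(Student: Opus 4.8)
The plan is to prove all the equivalences by showing that (iv) implies each of (i)--(iii) by a one-line computation, and that each of (i), (ii), (iii) implies (iv). For the easy direction, if $f(x)=\kappa\exp(-\frac12(x-\mu)^T\Sigma^{-1}(x-\mu))$ then the quadratic-in-$x$ terms cancel in $f(x+y)/f(x)$, leaving $\exp(\text{affine in }x)$; such a ratio is log-linear, hence simultaneously log-convex, log-concave, and (being the exponential of an affine function) convex. This settles (iv)$\Rightarrow$(i),(ii),(iii) at once.

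For (i)$\Rightarrow$(iv) I would set $g=1/f$ and aim to verify the two hypotheses of Lemma~\ref{l1}. Writing $R_y(x)=f(x+y)/f(x)$, the ratio hypothesis is immediate from a reflection: $g(x+y)/g(x)=f(x)/f(x+y)=R_{-y}(x+y)$ is convex in $x$, being the composition of the convex $R_{-y}$ with the affine map $x\mapsto x+y$. The substantive point is convexity of $g$ itself, for which $f$ gives no direct help (a positive integrable function is never convex). Here I would exploit integrability: midpoint convexity of $R_y$ at the points $\bar x\pm w$, after the substitution $y=z-\bar x$, reads $f(z)/f(\bar x)\le\frac12\bigl(f(z+w)/f(\bar x+w)+f(z-w)/f(\bar x-w)\bigr)$; integrating this nonnegative inequality in $z$ over $\mathbb{R}^n$ collapses every integral of a translate of $f$ to $1$, yielding $1/f(\bar x)\le\frac12\bigl(1/f(\bar x+w)+1/f(\bar x-w)\bigr)$. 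Thus $g=1/f$ is midpoint convex, and being Borel it is convex. Lemma~\ref{l1} then gives $g=\exp(\frac12x^TAx+b^Tx+c)$ with $A$ positive semidefinite, so $f=\exp(-\frac12x^TAx-b^Tx-c)$; integrability forces $A$ positive definite, and the normalization identifies $f$ as the Gaussian density with covariance $A^{-1}$.

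For (ii) and (iii) I would work with $h=\log f$ and reflect at the level of increments. Condition (ii) makes $\phi_y(x)=h(x+y)-h(x)$ convex in $x$, condition (iii) makes it concave; in either case the identity $\phi_{-y}(x+y)=-\phi_y(x)$ shows $\phi_y$ is simultaneously convex and concave, hence affine, say $\phi_y(x)=a(y)^Tx+\beta(y)$. Comparing $h(x+y+z)-h(x)$ evaluated in two orders forces $a$ to be additive and to satisfy $a(y)^Tx=a(x)^Ty$; since $a(y)$ is a Borel function of $y$ (as $a_i(y)=h(e_i+y)-h(e_i)-h(y)+h(0)$ and $h$ is Borel), a measurable additive map is linear, so $a(y)=Ay$ with $A$ symmetric. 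Then $x\mapsto h(x)-\frac12x^TAx$ is additive and Borel, hence linear, giving $h(x)=\frac12x^TAx+b^Tx+c$; integrability again forces $A$ negative definite and $f$ Gaussian.

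The main obstacle is the (i)$\Rightarrow$(iv) step: unlike in Lemma~\ref{l1}, $f$ is not convex, so the convexity needed to invoke the lemma must be manufactured. The integration-against-the-density trick is precisely what converts pointwise convexity of the ratios into convexity of $1/f$, and it is the one place where the normalization $\int f=1$, rather than mere positivity, is essential. A secondary technical point, in the (ii)/(iii) branch, is the passage from the functional equation to linearity, which relies on the measurability of $h$ to exclude pathological additive solutions.
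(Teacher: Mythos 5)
Your proof is correct, and its core step, (i)$\Rightarrow$(iv), is essentially the paper's own argument: you obtain convexity of $g=1/f$ by integrating the ratio inequality against the density (the paper integrates $f(x+y)/f(x)$ in $y$, which is your trick up to the change of variables $y=z-\bar x$), you verify the hypothesis that $g(x+y)/g(x)$ is convex via the same reflection $f(x)/f(x+y)=R_{-y}(x+y)$, and you finish with Lemma~\ref{l1} plus the integrability argument forcing positive definiteness. Two differences are worth flagging. First, a small inefficiency on your side: you extract only midpoint convexity and then invoke the theorem that measurable midpoint-convex functions are convex (Sierpi\'{n}ski); integrating the convexity inequality at a general combination $\lambda x_1+(1-\lambda)x_2$ works verbatim and yields convexity outright, which is what the paper does, so no measurability upgrade is needed there. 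Second, your handling of (ii) and (iii) is genuinely different: the paper disposes of them with two one-line reductions --- (iii)$\Rightarrow$(ii) by the reflection $h(x+y,-y)=f(x)/f(x+y)$, and (ii)$\Rightarrow$(i) because log-convex functions are convex --- and then reuses (i)$\Rightarrow$(iv), whereas you prove (ii)$\Rightarrow$(iv) and (iii)$\Rightarrow$(iv) directly by showing the increments $\phi_y(x)=\log f(x+y)-\log f(x)$ are simultaneously convex and concave, hence affine, and then solving the resulting functional equation, using measurability to rule out pathological (non-linear) additive solutions. Your route is longer and leans on the measurable Cauchy-equation theorem, but it buys something the paper's chain does not make explicit: for (ii) and (iii) it bypasses Lemma~\ref{l1} and the integration trick entirely, invoking the normalization $\int f=1$ only at the final step, so it in fact shows that any positive Borel function satisfying (ii) or (iii) must already be of log-quadratic form.
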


\begin{proof}
\begin{description}
\item{(iv)$\Rightarrow$(iii)} $f(x+y)/f(x)$ is log-linear in $x$ for every $y$.

\item{(iii)$\Rightarrow$(ii)} If $h(x,y)=f(x+y)/f(x)$ is log-concave in $x$ for every $y$, then so is $h(x+y,-y)=f(x)/f(x+y)$. Thus $f(x+y)/f(x)$ is log-convex in $x$ for every $y$.

\item{(ii)$\Rightarrow$(i)} Any log-convex function is convex.

\item{(i)$\Rightarrow$(iv)} Since $f$ is a density then integrating $f(x+y)/f(x)$ with respect to $y$ implies that $g(x)=1/f(x)$ is convex. Also, if $h(x,y)=f(x+y)/f(x)$ is convex in $x$ for every $y$ then so is $h(x+y,-y)=f(x)/f(x+y)$ and thus $g(x+y)/g(x)$ is convex in $x$ for each $y$. By Lemma~\ref{l1} we must have that for some symmetric positive semidefinite $A\in\mathbb{R}^{n\times n}$, $b\in \mathbb{R}^n$ and $c\in \mathbb{R}$ we have that
\begin{equation}
f(x)=1/g(x)=\exp\left(-\frac{1}{2}x^TAx-b^Tx-c\right)\,.
\end{equation}
Let $O\Lambda O^T$ be the spectral decomposition of $A$ with $O^TO=OO^T=I$ and $\Lambda=\text{diag}(\lambda_1,\ldots,\lambda_n)$ where $\lambda_i\ge 0$ for all $i$. Then, $f(Ox)|O|=f(Ox)$ is also a density satisfying
\begin{equation}
f(Ox)=\exp\left(-\frac{1}{2}x^T\Lambda x-(O^Tb)^Tx-c\right)\,.
\end{equation}
If $\lambda_i=0$ for any $i$ then the integral with respect to $x_i$ is infinite and thus $f(Ox)$ cannot be a density. Therefore $\lambda_i>0$ for all $i$ and thus $A$ is positive definite. This implies that $f$ is a multivariate normal density with positive definite covariance matrix $\Sigma=A^{-1}$ and mean vector $\mu=-A^{-1}b$.
\end{description}
\end{proof}

\section{Some discussion}\label{sec: counter-examples}
It would be interesting to check whether the conditions of Theorem \ref{t1} can be weakened. We do this with a series of remarks.

\begin{remark}{\rm
Given the equivalence of (ii) and (iii) of Theorem~\ref{t1} a natural question is why did we not consider concavity of $f(x+y)/f(x)$ in $x$ for all $y$? This is because there is no density satisfying this. The reason is that if $f(x+y)/f(x)$ is concave in $x$ for each $y$, then by integrating with respect to $y$, $1/f(x)$ is concave and positive and thus must be a constant (since it is bounded below by zero). Therefore $f(x)$ must be a positive constant which is impossible since $f$ is a density.
}\end{remark}

\begin{remark}\label{diff}{\rm
In Lemma~\ref{l1} if instead of convexity of $g$ we assume that $g$ is differentiable (all else being the same), then we would instantly arrive at the conclusion that $\nabla g(x)^Ty/g(x)$ is convex (and thus concave and therefore affine) in $x$ for each $y$. We note that in this case it would even suffice to assume that $g(x+y)/g(x)$ is convex in $x$ for all $y$ in a neigborhood of zero. In this case one would have that $A$ that appears in this lemma need not be positive semidefinite. Although this is true, this would not help us in the proof of Theorem~\ref{t1} for which Lemma~\ref{l1} was needed. Note that only the convexity of $f(x+y)/f(x)$ in $x$ for each $y$ is assumed in~(i) of Theorem~\ref{t1}. Nothing about the properties of $f$ (such as differentiability/convexity/concavity) other than it being a positive density is assumed. 

We note that if we assume that $f$ is differentiable, then the condition that $f(x+y)/f(x)$ is convex in $x$ for ``all $y$'' can be weakened to ``for all $y$ in a neighborhood of the origin''. Our view is that assuming differentiability on top of the convexity in $x$ of $f(x+y)/f(x)$, rather than inferring it, substantially weakens the result.
}\end{remark}

\begin{remark}{\rm
Can the convexity of $f(x+y)/f(x)$ in $x$ for each $y$ be replaced with (the weaker) quasi-convexity? The answer is no. For example take $f(x)=e^{-|x|}/2$  ($x\in\mathbb{R}$) which is, of course, not Gaussian. It is easy to check that with $y^+=\max(y,0)$ and $y^-=-\min(y,0)$,

\begin{equation}
\log\frac{f(x+y)}{f(x)}=\begin{cases}
y&x\in(-\infty,-y^+]\\
{-y-2x}&x\in (-y^+,0]\\
{y+2x}&x\in (0,-y^-]\\
{-y}&x\in[y^-,\infty)\,.
\end{cases}
\end{equation}
Therefore, noting that either $(-y^+,0]$ or $(0,-y^-]$ is empty, $\log (f(x+y)/f(x))$ and thus $f(x+y)/f(x)$ is monotone, hence quasi-convex in $x$ for each $y$.
}\end{remark}

\begin{remark}{\rm
In Remark~\ref{diff} we mentioned that when $f$ is differetiable, then convexty of $f(x+y)/f(x)$ in $x$ for all $y$ in a neighborhood of zero (rather than for all $y$) implies that $f$ is Gaussian. Are there non-Gaussian differentiable densities for which $f(x+y)/f(x)$ is convex in $x$ for all $y\not\in(-\epsilon,\epsilon)$ for some $\epsilon>0$? 
The answer is yes. 

Let $f(x)=ce^{-x^4}$, where $c$ is a normalizing constant. Then it is straightforward to check that the second derivative of $h(x,y)=f(x+y)/f(x)$ with respect to $x$ is given by
\begin{align}
h_{xx}(x,y)=h(x,y)\left(-12(2x+y)y+y^2(3(2x+y)^2+y^2)^2\right)\,.
\end{align}
Since $h_{xx}(y)/y\to -24 x$ as $y\to 0$ we clearly have that for any $x\not=0$ there are $y$ sufficiently close to zero for which $h_{xx}(x,y)<0$ and thus $h(x,y)$ is not convex in $x$ for such $y$. This is, of course, expected for any (differentiable) non-Gaussian density.

On the other hand, since $2ab\le a^2+b^2$, 
\begin{equation}
2(2x+y)y\le (2x+y)^2+y^2\le 3(2x+y)^2+y^2
\end{equation}
and thus for all $y$ such that $y^2\ge 6$ we clearly have that
\begin{equation}
12(2x+y)y\le y^2( 3(2x+y)^2+y^2)\le y^2 (3(2x+y)^2+y^2)^2
\end{equation}
and thus, with $\epsilon=\sqrt{6}$, $f(x,y)/f(x)$ is convex in $x$ for all $y\not\in \left(-\epsilon,\epsilon\right)$.
}\end{remark}

\end{document}